\definecolor{theoremback}{rgb}{0.8235294, 0.8627451, 0.9137255}
\definecolor{defcolor}{rgb}{0.271, 0.443,0.663}
\newtheoremstyle{nthm}
{3pt}
{3pt}
{\itshape}
{0em}
{\bfseries}
{.}
{.5em}
{}
\newtheoremstyle{ndef}
{3pt}
{3pt}
{}
{0em}
{\bfseries}
{.}
{.5em}
{}
\newtheoremstyle{nrem}
{3pt}
{3pt}
{}
{0em}
{\itshape}
{.}
{.5em}
{}
\theoremstyle{nthm}
\newmdtheoremenv[linecolor=white,linewidth=2,backgroundcolor=theoremback]{thm}[subsection]{Theorem}
\newtheorem{prop}[subsection]{Proposition}
\newtheorem{cor}[subsection]{Corollary}
\theoremstyle{ndef}
\theoremstyle{nrem}
\newcommand{\Pcl}{\mathcal{P}}
\newcommand{\Z}{\mathbb{Z}}
\newcommand{\fgmod}[1]{#1\text{-}\mathbf{mod_{fg}}}
\DeclareMathOperator{\Jac}{Jac}
\DeclareMathOperator{\End}{End}
\begin{document}
\title[The $K$-Theory of Commuting Endomorphisms]{The $K$-Theory of Finitely Many Commuting Endomorphisms}
\author{Jason K.C. Polak}
\begin{abstract}
    For a field $k$ we compute the $K$-theory of the exact category of $k[t_1,\dots,t_n]$-modules that are finite-dimensional over $k$, generalising the work of Kelley and Spanier. 
\end{abstract}
\keywords{Algebraic K-theory, Endomorphisms, Grothendieck group}
\date{\today}
\maketitle

\section{Introduction}\label{sec:intro}

Let $A$ be a commutative ring and $\Pcl(A)$ the category of finitely-generated projective $A$-modules. Let $\End\Pcl(A)$ be the category whose objects are endomorphisms $P\to P$ where $P\in\Pcl(A)$, and whose morphisms $\alpha:(P\to P)\to (Q\to Q)$ are morphisms $\alpha:P\to Q$ in $\Pcl(A)$ such that the diagram
\begin{equation*}
    \xymatrix{
        P\ar[d]\ar[r]^\alpha & Q\ar[d]\\
        P\ar[r]^\alpha & Q
        }
\end{equation*}
commutes. The category $\End\Pcl(A)$ is naturally equivalent to the category of $A[t]$-modules that are finitely generated and projective as $A$-modules via the inclusion map $A\to A[t]$. The category $\End\Pcl(A)$ is an exact category and one would like to calculate its Quillen $K$-theory groups. This type of $K$-theory calculation falls under the more general problem of calculating for a ring homomorphism $R\to S$, the $K$-theory of the category of $S$-modules that are finitely generated and projective as $R$-modules.

The calculation of $K_i(\End\Pcl(A))$ was given by Kelley and Spanier \cite{KelleySpanier1968} when $A$ is a field and $i=0$. Almkvist \cite{Almkvist1978} did the calculation when $A$ is an arbitrary commutative ring and $i=0$, and when $A$ is a field and $i$ is arbitrary. To describe these computations, define the abelian group whose underlying set is
\begin{align*}
    \tilde{A}_0 = \left\{ \frac{1 + a_1t + \cdots + a_nt^n}{1 + b_1t + \cdots + b_mt^m} : a_i,b_j\in A\right \}
\end{align*}
and whose binary operation is given by the usual multiplication of rational functions. If $f:M\to M$ is an endormorphism of a finitely-generated projective $A$-module, then the characteristic polynomial $\lambda_t(f)$ may be defined by extending $f$ to an endomorphism of a free module, and then defining $\lambda_t(f) = \det(1 + tf)$; see \cite{Almkvist1978} for an alternative definition. We can use this map to describe the results of Kelley and Spanier and Almkvist:
\begin{thm}There is an isomorphism
    \begin{align*}
        K_0(\End\Pcl(A))&\longrightarrow K_0(A) \times \tilde{A}_0\\
    \end{align*}
    given on generators by
    \begin{align*}
        [M\xrightarrow{f} M] &\longmapsto ([M], \lambda_t(f)).
    \end{align*}
\end{thm}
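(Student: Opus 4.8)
The plan is to realise the stated map as the pair $(U_*, \lambda_t)$, where $U_*\colon K_0(\End\Pcl(A))\to K_0(A)$ is induced by the forgetful functor $(M\xrightarrow{f}M)\mapsto M$, and then to reduce the whole statement to a single assertion about $\ker U_*$. First I would check that $\lambda_t$ is a homomorphism into $\tilde{A}_0$: for a short exact sequence $0\to(M',f')\to(M,f)\to(M'',f'')\to 0$ the underlying sequence of projective $A$-modules splits, so after extending to a free module $f$ becomes block upper-triangular with diagonal blocks $f',f''$, whence $\det(1+tf)=\det(1+tf')\det(1+tf'')$. Since the group law on $\tilde{A}_0$ is multiplication, this is exactly additivity on $K_0$, and each $\det(1+tf)$ lies in $1+tA[t]\subseteq\tilde{A}_0$. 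The forgetful functor is split by the zero section $Z\colon M\mapsto(M\xrightarrow{0}M)$, and since $\lambda_t$ equals $1$ on the image of $Z$, the map $(U_*,\lambda_t)$ carries the summand $Z_*K_0(A)$ isomorphically onto $K_0(A)\times\{1\}$. Thus everything comes down to showing that $\lambda_t$ restricts to an isomorphism $\ker U_*\xrightarrow{\sim}\tilde{A}_0$.

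For surjectivity I would use companion matrices. Given $g=1+a_1t+\cdots+a_nt^n\in 1+tA[t]$, the companion endomorphism $C_g$ of $A^n$ attached to the (signed) reverse polynomial realises $g$ as $\lambda_t(C_g)$, and $[A^n,C_g]-[A^n,0]$ lies in $\ker U_*$. This suggests the candidate inverse: set $\Phi(g)=[A^n,C_g]-[A^n,0]$ on polynomials. The key point making $\Phi$ well defined is the exact sequence of $A[t]$-modules $0\to A[t]/(f)\xrightarrow{\cdot h}A[t]/(fh)\to A[t]/(h)\to 0$ for monic $f,h$, all of whose terms are free over $A$ and hence lie in $\End\Pcl(A)$; this yields $\Phi(gg')=\Phi(g)+\Phi(g')$, so multiplicativity lets $\Phi$ descend to a homomorphism $\tilde{A}_0\to\ker U_*$, and by construction $\lambda_t\circ\Phi=\mathrm{id}$. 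In particular $\lambda_t$ is already surjective.

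The substance of the theorem — and the step I expect to be the main obstacle — is the reverse identity $\Phi\circ\lambda_t=\mathrm{id}$ on $\ker U_*$, i.e.\ that $[M,f]-[M,0]=\Phi(\lambda_t(f))$ for every $(M,f)$, so that $\ker U_*$ is generated by companion classes subject only to the relations already present in $\tilde{A}_0$. I would first reduce to $M$ free by choosing $Q$ with $P\oplus Q$ free and using $[P\oplus Q,f\oplus 0]=[P,f]+[Q,0]$ together with $\lambda_t(f\oplus 0)=\lambda_t(f)$. For $M=A^n$ the content is a ``rational canonical form up to $K_0$'': out of $(A^n,f)$ and the Cayley--Hamilton relation $\chi_f(f)=0$ one must build enough exact sequences in $\End\Pcl(A)$ to express $[A^n,f]$ through cyclic companion modules $A[t]/(\chi)$ whose reverse polynomials multiply to $\det(1+tf)$. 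When $A=k$ is a field this is immediate, since $\End\Pcl(k)$ is the abelian category of finite-length $k[t]$-modules: $K_0$ is free on the simple objects $k[t]/(p)$ by Jordan--Hölder, and $\lambda_t$ visibly matches these (with $p=t$ contributing trivially) against the free generators of $\tilde{k}_0$ indexed by irreducibles $p\neq t$. Over a general commutative ring the modules no longer decompose, and the filtration must be produced by Almkvist's argument; I expect the technical core to be showing that the resulting companion factorisation is independent of choices and compatible with $\Phi$, a task conveniently organised by the characteristic resolution $0\to A[t]\otimes_A M\xrightarrow{t-f}A[t]\otimes_A M\to M\to 0$ whose determinant recovers $\chi_f$.
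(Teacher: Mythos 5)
First, a point of comparison: the paper does not actually prove this theorem --- it is quoted as background with citations to Kelley--Spanier and Almkvist --- so your proposal can only be measured against the classical proofs and against the paper's own (different) method, which recovers the field case via d\'evissage and filtered colimits. Your architecture is sound and follows the classical route: the zero-section splitting correctly reduces everything to showing $\lambda_t\colon\ker U_*\to\tilde{A}_0$ is an isomorphism; multiplicativity of $\det(1+tf)$ on short exact sequences (which split over $A$) is right; companion matrices do give surjectivity; and the sequence $0\to A[t]/(f)\xrightarrow{\cdot h}A[t]/(fh)\to A[t]/(h)\to 0$ (exact because monic $h$ is a nonzerodivisor) makes $\Phi$ a monoid homomorphism on $(1+tA[t],\times)$, which descends to $\tilde{A}_0$ since $\tilde{A}_0$ is precisely the group completion of that monoid. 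Your field-case argument is complete: $\End\Pcl(k)$ is the abelian category of finite-length $k[t]$-modules, $K_0$ is free on the simples $k[t]/(p)$, and the map to $\Z\times\tilde{k}_0$ is triangular with respect to these bases (with $p=t$ hitting the $\Z$ factor). That is essentially the $n=1$, $i=0$ instance of the paper's Theorem~\ref{thm:maintheorem}, reached by Jordan--H\"older instead of d\'evissage.

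The genuine gap --- which you flag yourself, but do not close --- is injectivity over an arbitrary commutative ring, i.e.\ $\Phi\circ\lambda_t=\mathrm{id}$ on $\ker U_*$, and the tools you propose do not suffice. Over a general $A$ there is no rational canonical form: Cayley--Hamilton makes $M$ a module over $A[t]/(\chi_f)$ but does not produce a filtration of $(A^n,f)$ with cyclic quotients, so $[A^n,f]$ cannot simply be rewritten in companion classes. The characteristic resolution $0\to A[t]\otimes_A M\xrightarrow{t-f}A[t]\otimes_A M\to M\to 0$ also cannot be used directly to manufacture relations in $K_0(\End\Pcl(A))$, because its middle terms are not finitely generated over $A$ and hence do not lie in $\End\Pcl(A)$; transporting that relation back into the category requires a resolution/localization argument that is exactly the hard content, not an organisational convenience. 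Almkvist's actual injectivity proof is a substantive independent step (reduction to finitely generated, hence Noetherian, subrings of $A$ together with a delicate analysis of characteristic polynomials), not a verification of compatibility of choices. As written, then, your proposal proves the theorem in Kelley--Spanier's case $A=k$ a field, but for the general statement the decisive step is missing.
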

In this paper, we generalise this result to the case of finitely many commuting endomorphisms and where $A=k$ is a field:
\begin{thm}
    The algebraic $K$-groups of the exact category $\End_n\Pcl(k)$ are given by
    \begin{align}
        K_i(\End_n\Pcl(k)) \cong \bigoplus_{M} K_i(k[t_1,\dots,t_n]/M)
    \end{align}
    where $M$ ranges over all the maximal ideals of the polynomial ring $k[t_1,\dots,t_n]$.
\end{thm}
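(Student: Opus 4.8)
The plan is to first reinterpret the exact category $\End_n\Pcl(k)$ concretely and then reduce the computation to a point-by-point dévissage. As in the $n=1$ discussion of the introduction, $\End_n\Pcl(k)$ is equivalent to the category of $R$-modules that are finite-dimensional over $k$, where $R = k[t_1,\dots,t_n]$; since $k$ is a field, finite-dimensionality is preserved under submodules, quotients, and extensions, so this category is in fact abelian, and I may apply the $K$-theory of abelian categories rather than merely that of exact categories. Write $\mathcal{C}$ for this category.

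The first main step is a block decomposition of $\mathcal{C}$ indexed by the closed points of $\mathbb{A}^n_k$. Every object $M$ of $\mathcal{C}$ is annihilated by a zero-dimensional ideal $I = \mathrm{Ann}(M)$, so $R/I$ is Artinian and splits by the Chinese Remainder Theorem as a finite product of local rings supported at the maximal ideals $\mathfrak{m}\supseteq I$; correspondingly $M$ decomposes uniquely as a direct sum of its $\mathfrak{m}$-primary components. Letting $\mathcal{C}_{\mathfrak{m}}\subseteq\mathcal{C}$ denote the full subcategory of modules annihilated by a power of $\mathfrak{m}$, I would check that $\Hom_R(M,N) = 0$ whenever $M\in\mathcal{C}_{\mathfrak{m}}$ and $N\in\mathcal{C}_{\mathfrak{m}'}$ with $\mathfrak{m}\neq\mathfrak{m}'$, since the image would be simultaneously $\mathfrak{m}$- and $\mathfrak{m}'$-primary, hence zero. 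Thus $\mathcal{C}$ is the orthogonal direct sum $\bigoplus_{\mathfrak{m}}\mathcal{C}_{\mathfrak{m}}$.

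I would then pass this decomposition through $K$-theory. Since every object is supported on a finite set of maximal ideals, $\mathcal{C}$ is the filtered colimit of the subcategories $\mathcal{C}_S = \bigoplus_{\mathfrak{m}\in S}\mathcal{C}_{\mathfrak{m}}$ over finite sets $S$ of maximal ideals; as $K$-theory commutes with filtered colimits and sends a finite product of exact categories to the corresponding product (a finite direct sum) of $K$-groups, this yields $K_i(\mathcal{C})\cong\bigoplus_{\mathfrak{m}}K_i(\mathcal{C}_{\mathfrak{m}})$.

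Finally, for each $\mathfrak{m}$ I would compute $K_i(\mathcal{C}_{\mathfrak{m}})$ by Quillen's dévissage theorem. The objects of $\mathcal{C}_{\mathfrak{m}}$ are finite-length $R$-modules, and the full subcategory $\mathcal{B}_{\mathfrak{m}}$ of semisimple such modules, equivalently finite-dimensional $R/\mathfrak{m}$-vector spaces, is closed under subobjects, quotients, and finite direct sums inside $\mathcal{C}_{\mathfrak{m}}$. The filtration of any $M\in\mathcal{C}_{\mathfrak{m}}$ by the powers $\mathfrak{m}^jM$ is finite with subquotients killed by $\mathfrak{m}$, hence semisimple, so the hypotheses of dévissage hold and $K(\mathcal{B}_{\mathfrak{m}})\simeq K(\mathcal{C}_{\mathfrak{m}})$. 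Since $R/\mathfrak{m}$ is a field, $\mathcal{B}_{\mathfrak{m}}$ is the category $\Pcl(R/\mathfrak{m})$, whence $K_i(\mathcal{C}_{\mathfrak{m}})\cong K_i(R/\mathfrak{m})$; substituting gives the stated isomorphism. I expect the main obstacle to lie not in any single step but in making the infinite block decomposition rigorous at the level of spectra, namely verifying that the colimit-and-finite-product formalism genuinely delivers a direct sum over the possibly infinitely many maximal ideals, whereas the dévissage input, once its hypotheses are checked, is essentially automatic.
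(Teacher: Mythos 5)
Your proposal is correct, and it takes a genuinely different route from the paper's. The paper never decomposes the category by support: it presents $\End_n\Pcl(k)$ as a filtered colimit of the module categories $\fgmod{k[T]/I}$ over ideals $I$ with finite-dimensional quotient, applies d\'evissage to each Artinian algebra $k[T]/I$ to pass to its semisimple quotient $(k[T]/I)/\Jac(k[T]/I)$, splits that quotient by the Chinese Remainder Theorem into a product of the fields $k[T]/M_j$, and then must identify the transition maps in the resulting colimit of $K$-groups --- checking that the forgetful functor along a ring projection induces on $K$-theory the inclusion of a direct summand --- before passing to the limit to obtain $\bigoplus_M K_i(k[T]/M)$. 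You instead perform the decomposition once and for all at the level of the category: the orthogonality $\Hom_R(M,N)=0$ for $M\in\mathcal{C}_{\mathfrak{m}}$, $N\in\mathcal{C}_{\mathfrak{m}'}$ (which follows since powers of distinct maximal ideals are comaximal, so the image would be annihilated by the unit ideal) gives $\mathcal{C}\cong\bigoplus_{\mathfrak{m}}\mathcal{C}_{\mathfrak{m}}$, and d\'evissage is applied per block via the $\mathfrak{m}$-adic filtration rather than via the Jacobson radical of each quotient ring; note each $\mathcal{C}_{\mathfrak{m}}$ is a Serre subcategory (an extension of modules killed by $\mathfrak{m}^a$ and $\mathfrak{m}^b$ is killed by $\mathfrak{m}^{a+b}$), so the abelian-category form of d\'evissage applies as you use it. What each approach buys: yours makes the direct sum appear structurally, so the colimit bookkeeping that occupies the paper's final proof (tracking which summands survive as $I$ grows to $J$) collapses to the one-line Hom-vanishing check, at the modest cost of verifying the block decomposition (including that $R/\mathrm{Ann}(M)$ is finite-dimensional, which holds since it embeds in $\End_k(M)$); the paper's formulation keeps everything phrased in terms of quotient rings, which plugs directly into Quillen's theorem that $K$-theory commutes with filtered colimits of exact categories. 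Finally, your closing worry about rigor ``at the level of spectra'' is unfounded, and your own outline already dissolves it: writing $\mathcal{C}=\varinjlim_S \mathcal{C}_S$ over finite sets $S$ of maximal ideals expresses the infinite block decomposition as a filtered colimit of finite products, and Quillen's colimit theorem (the same result the paper cites) together with the compatibility of the $Q$-construction with finite products of exact categories yields $K_i(\mathcal{C})\cong\bigoplus_{\mathfrak{m}}K_i(\mathcal{C}_{\mathfrak{m}})$ with no more delicacy than the colimit the paper itself takes.
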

We remark that our our proof is different than the proofs in \cite{Almkvist1978} and in \cite{KelleySpanier1968}. One feature is that our result for $i=0$ is not phrased in terms of a characteristic polynomial map, which is an advantage in the sense that it is more explicit in terms of the structure of the $K$-groups, but a disadvantage in the sense that working with products is more difficult. The result for $n=1$ and higher $K$-theory is \cite[Theorem 5.2]{Almkvist1978}, but again, our proof is entirely different.

\section{The Category $\End_n\Pcl(k)$}

Let $\End_n\Pcl(k)$ denote the exact category of $k[T]:=k[t_1,\dots,t_n]$-modules that are finitely generated as $k$-modules. In this section we calculate $K_i(\End_n\Pcl(k))$. A finite-dimensional $k$-vector space $V$ with any $n$ commuting endomorphisms $f_1,f_2,\dots,f_n$ of $V$ may also be considered as a $k[t_1,\dots,t_n]/I$ module where $I$ is the kernel of the map $k[t_1,\dots,t_n]\to k[f_1,\dots,f_n]$ given by $t_i\mapsto f_i$; hence:

\begin{prop}
    The category $\End_n\Pcl(k)$ is naturally equivalent to the filtered direct limit
    \begin{align}\label{eqn:dirlimit}
        \varinjlim_I \left(\fgmod{k[T]/I}\right)
    \end{align}
    of exact categories, where $I$ runs over the set of ideals of $k[T]$ such that the quotient $k[T]/I$ is finite-dimensional over $k$, and if $I\subseteq J$ are two such ideals, then the functor $\fgmod{k[T]/J}\to\fgmod{k[T]/I}$ is the forgetful functor induced by the quotient map $k[T]/I\to k[T]/J$.
\end{prop}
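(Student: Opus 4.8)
The plan is to produce an explicit comparison functor from the filtered colimit to $\End_n\Pcl(k)$ and to verify that it is an equivalence of exact categories; the finite-dimensionality hypothesis is what drives every step. Throughout, write $\mathcal{I}$ for the set of ideals $I\subseteq k[T]$ with $\dim_k k[T]/I<\infty$. First I would check that the system is genuinely filtered, so that the colimit of exact categories is well-defined. The transition functor $\fgmod{k[T]/J}\to\fgmod{k[T]/I}$ attached to $I\subseteq J$ is restriction of scalars along $k[T]/I\to k[T]/J$, so the colimit runs toward smaller ideals. The key point is that for $I_1,I_2\in\mathcal{I}$ the ideal $I_1\cap I_2$ again lies in $\mathcal{I}$, since $k[T]/(I_1\cap I_2)$ embeds into $k[T]/I_1\times k[T]/I_2$, which is finite-dimensional; and $I_1\cap I_2$ is contained in both $I_1$ and $I_2$, so both $\fgmod{k[T]/I_1}$ and $\fgmod{k[T]/I_2}$ admit transition functors into $\fgmod{k[T]/(I_1\cap I_2)}$. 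Thus $\mathcal{I}$ is directed and the filtered colimit makes sense; recall that in such a colimit every object and morphism is represented at a finite stage, and a sequence is exact exactly when it is exact at some sufficiently late stage.

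Next I would build the comparison functor. For each $I\in\mathcal{I}$, a finitely generated $k[T]/I$-module is finite-dimensional over $k$ and becomes a $k[T]$-module through $k[T]\to k[T]/I$; this defines an exact functor $\fgmod{k[T]/I}\to\End_n\Pcl(k)$. These functors commute strictly with the transition functors, since both simply remember the underlying $k[T]$-action, so the universal property of the filtered colimit yields an exact functor $\Phi\colon\varinjlim_I\fgmod{k[T]/I}\to\End_n\Pcl(k)$.

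Then I would check that $\Phi$ is fully faithful and essentially surjective. For essential surjectivity, given $V\in\End_n\Pcl(k)$ set $I=\mathrm{Ann}_{k[T]}(V)$; since $\dim_k V<\infty$ the ring $k[T]/I$ embeds into $\End_k(V)$ and hence is finite-dimensional, so $I\in\mathcal{I}$ and $V$ lies in the image of $\fgmod{k[T]/I}$. For full faithfulness, given objects represented by finite-dimensional $V,W$, choose $I''\in\mathcal{I}$ annihilating both (for instance $I''=\mathrm{Ann}(V)\cap\mathrm{Ann}(W)$); then a $k[T]/I''$-linear map $V\to W$ is the same as a $k[T]$-linear map, and since the colimit hom-set is $\varinjlim_{I''}\Hom_{k[T]/I''}(V,W)=\Hom_{k[T]}(V,W)$, the map on hom-sets is a bijection. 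The same annihilator argument shows that $\Phi$ reflects short exact sequences, so it is an equivalence of exact categories.

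The main obstacle I anticipate is bookkeeping around the exact structure of the colimit rather than any deep difficulty: one must confirm that a sequence declared exact at a finite stage $\fgmod{k[T]/I}$ maps to a sequence of $k[T]$-modules that is exact over $k$, and conversely that every short exact sequence in $\End_n\Pcl(k)$ is already defined and exact over a single finite-codimensional quotient. This last principle that everything is defined over one finite-dimensional quotient, namely the annihilator trick applied uniformly to objects, morphisms, and short exact sequences, is the single idea that makes all of the checks go through.
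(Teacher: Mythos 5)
Your proposal is correct and follows essentially the same route as the paper: the same filteredness check via $I_1\cap I_2$, and the same comparison functor given by restriction of scalars along $k[T]\to k[T]/I$. The paper simply declares the equivalence ``easy to see,'' while you spell out the details it omits --- essential surjectivity and full faithfulness via the annihilator ideal $\mathrm{Ann}_{k[T]}(V)$, plus compatibility of exact structures.
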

\begin{proof}
    The limit is indeed filtered, since $k[T]/(I\cap J)$ is finite-dimensional whenever $k[T]/I$ and $k[T]/J$ are finite-dimensional. Define a functor
    \begin{align*}
    F:\varinjlim_I \left(\fgmod{k[T]/I}\right)\to\End_n\Pcl(A)
\end{align*}
as follows. Any element in $\varinjlim_I \left(\fgmod{k[T]/I}\right)$ is represented by $V\in\fgmod{k[T]/I}$ for some $I$. We let $F(V)$ to be the $k[T]$ module given by the forgetful functor induced by the map $k[T]\to k[T]/I$. This functor is well-defined because $k[T]/I$ is finite-dimensional, and it is easy to see that $F$ gives the required natural equivalence.
\end{proof}
Using this observation and the result that taking $K$-groups of exact categories commutes with filtered direct limits \cite[\S2]{Quillen1972}, we obtain the following corollary.
\begin{cor}
    The $K$-theory of the category $\End_n\Pcl(A)$ may be calculated as the direct limit
    \begin{align}
        K_i(\End_n\Pcl(A)) \cong \varinjlim_I K_i\left(\fgmod{k[T]/I}\right)
    \end{align}
\end{cor}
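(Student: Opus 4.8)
The plan is to read this off directly from the Proposition together with the cited theorem of Quillen that higher $K$-theory commutes with filtered direct limits of exact categories, so that essentially no new computation is required. The Proposition already supplies an equivalence of exact categories $F:\varinjlim_I(\fgmod{k[T]/I})\to\End_n\Pcl(A)$, and the first step is merely to record that an equivalence of exact categories which is exact in both directions induces isomorphisms on all $K$-groups. This holds because the $Q$-construction is functorial in exact functors, so an exact equivalence produces a homotopy equivalence of the associated $Q$-categories and hence isomorphisms $K_i(\varinjlim_I\fgmod{k[T]/I})\cong K_i(\End_n\Pcl(A))$ for every $i$.

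The second and final step is to invoke \cite[\S2]{Quillen1972} to obtain $K_i(\varinjlim_I\fgmod{k[T]/I})\cong\varinjlim_I K_i(\fgmod{k[T]/I})$, and then to compose the two isomorphisms. Once the hypotheses of Quillen's theorem are in place this composition is immediate, so the substance of the argument lies entirely in justifying its applicability rather than in any manipulation of $K$-groups.

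The one point I would expect to require genuine care — and the place where the real content sits — is confirming that we have presented $\End_n\Pcl(A)$ as a filtered colimit in the category of exact categories and \emph{exact} functors, which is exactly what Quillen's theorem consumes. Filteredness of the indexing set was already checked in the proof of the Proposition, since $k[T]/(I\cap J)$ is finite-dimensional whenever both $k[T]/I$ and $k[T]/J$ are. What remains is to verify that each transition functor $\fgmod{k[T]/J}\to\fgmod{k[T]/I}$ for $I\subseteq J$ is exact. Here I would observe that in both $\fgmod{k[T]/I}$ and $\fgmod{k[T]/J}$ a short sequence is admissible exact precisely when its underlying sequence of finite-dimensional $k$-vector spaces is exact; since the transition functor is the forgetful functor along the surjection $k[T]/I\to k[T]/J$ and leaves the underlying vector spaces unchanged, it both preserves and reflects exactness. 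Granting this, the two cited facts assemble into the stated isomorphism with no further work.
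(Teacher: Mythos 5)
Your proposal is correct and follows exactly the paper's route: the paper derives this corollary in a single line by combining the equivalence from the preceding Proposition with Quillen's result that $K$-theory commutes with filtered direct limits of exact categories, which is precisely your two-step composition. Your added verifications --- that an exact equivalence induces isomorphisms on $K$-groups and that the forgetful transition functors are exact because admissible exactness in $\fgmod{k[T]/I}$ is detected on underlying $k$-vector spaces --- are correct details the paper leaves implicit.
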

For any ring $R$, let $\Jac(R)$ denote the Jacobson radical of $R$. Any surjective ring homomorphism $R\to S$ induces a surjective ring homomorphism $R/\Jac(R)\to S/\Jac(S)$, and a commutative diagram of rings
\begin{equation*}
    \xymatrix{
        R\ar[r]\ar[d] & S\ar[d]\\
        R/\Jac(R)\ar[r] & S/\Jac(S)
    }
\end{equation*}
In turn, whenever $S$ is finitely generated as an $R$-module, we have a commutative diagram of forgetful functors
\begin{equation}\label{eqn:forgetful}
    \xymatrix{
        \fgmod{R} & \fgmod{S}\ar[l]\\
        \fgmod{R/\Jac(R)}\ar[u] & \fgmod{S/\Jac(S)}\ar[l]\ar[u]
        }
\end{equation}
In particular, this applies to the ring homomorphisms $k[T]/I\to k[T]/J$ for $I\subseteq J$ appearing in the direct limit of~\eqref{eqn:dirlimit}. 
\begin{prop}\label{thm:lastreduction}
    The natural transformation of the direct limits of categories induced by the forgetful functors as in \eqref{eqn:forgetful} for $R = k[T]/I$ induces an isomorphism algebraic $K$-groups
\begin{align*}
    \varinjlim_I K_i\left( \fgmod{\frac{k[T]/I}{\Jac(k[T]/I)}}\right)\overset{\sim}{\longrightarrow} \varinjlim_I K_i\left(\fgmod{k[T]/I}\right).
\end{align*}
\end{prop}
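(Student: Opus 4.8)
The plan is to reduce the isomorphism of $K$-groups to the device of dévissage applied to each category in the direct limit, and then to check that dévissage is compatible with the transition maps so that it passes to the colimit. The key structural fact is that for a finite-dimensional $k$-algebra $R = k[T]/I$, the Jacobson radical $\Jac(R)$ is nilpotent, so the inclusion of the semisimple quotient $R/\Jac(R)$ into the picture is governed by a filtration by powers of the radical.

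First I would fix a single ideal $I$ with $R := k[T]/I$ finite-dimensional over $k$, and observe that the forgetful functor
\begin{align*}
    \fgmod{R/\Jac(R)} \longrightarrow \fgmod{R}
\end{align*}
realises the category of semisimple (equivalently, $\Jac(R)$-torsion, i.e. radical-annihilated) modules as a full exact abelian subcategory of the abelian category of all finitely generated $R$-modules. The central claim is that this inclusion induces an isomorphism on all $K_i$. This is precisely Quillen's dévissage theorem: since $R$ is artinian, every finitely generated $R$-module $V$ admits a finite filtration
\begin{align*}
    V \supseteq \Jac(R)V \supseteq \Jac(R)^2 V \supseteq \cdots \supseteq \Jac(R)^N V = 0,
\end{align*}
whose successive quotients are annihilated by $\Jac(R)$ and hence are objects of $\fgmod{R/\Jac(R)}$. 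Thus the hypotheses of dévissage are met, and for each fixed $I$ we get $K_i(\fgmod{R/\Jac(R)}) \xrightarrow{\sim} K_i(\fgmod{R})$.

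The second step is to promote these per-$I$ isomorphisms to an isomorphism of the direct limits. Because $K$-theory commutes with filtered colimits (the corollary above), it suffices to verify that the dévissage isomorphisms are natural with respect to the transition functors in \eqref{eqn:forgetful}: for $I \subseteq J$, the square of forgetful functors relating $R = k[T]/I$ and $S = k[T]/J$ to their semisimple quotients must commute up to a compatibility that induces a commuting square on $K$-groups. Here I would note that the forgetful functor $\fgmod{S}\to\fgmod{R}$ carries a radical filtration on an $S$-module to a refinement of its radical filtration as an $R$-module (since $\Jac(R)$ maps into $\Jac(S)$ under $R \to S$), so dévissage is compatible with restriction of scalars. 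Taking the colimit of the resulting commutative squares of isomorphisms then yields the claimed isomorphism of direct limits.

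The main obstacle I anticipate is the naturality in step two rather than the dévissage in step one: one must be careful that the diagram \eqref{eqn:forgetful} of forgetful functors commutes on the nose (or up to natural isomorphism of exact functors), and that the filtrations witnessing dévissage are genuinely compatible across the transition maps, so that the colimit of isomorphisms is again an isomorphism. The dévissage input itself is essentially a black-box application of Quillen's theorem once the finite radical filtration is exhibited, and the finite-dimensionality of $k[T]/I$ guarantees that $\Jac(k[T]/I)$ is nilpotent, supplying exactly that filtration.
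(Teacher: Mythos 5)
Your proposal matches the paper's proof: both fix an ideal $I$, use that $\Jac(k[T]/I)$ is nilpotent because $k[T]/I$ is Artinian, apply Quillen's d\'evissage theorem to get $K_i(\fgmod{R/\Jac(R)})\xrightarrow{\sim}K_i(\fgmod{R})$ for each $I$, and then pass to the filtered colimit. Your extra care about naturality in the second step is sound but already built into the paper's setup, since the commutative square \eqref{eqn:forgetful} of exact forgetful functors commutes on the nose (the d\'evissage isomorphism does not depend on a choice of filtration), so the per-$I$ isomorphisms assemble into an isomorphism of direct limits exactly as you describe.
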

\begin{proof}
    For any Artinian ring $R$, the Jacobson radical $\Jac(R)$ is nilpotent (e.g. \cite[Theorem 4.12]{Lam1991}), and so devissage \cite[\S5, Theorem 4]{Quillen1972} shows that the inclusion $\fgmod{R/\Jac(R)}\to \fgmod{R}$ induces an isomorphism 
    \begin{align*}
    K_i(\fgmod{R/\Jac(R)})\to K_i(\fgmod{R})
\end{align*}
    (see \cite[Page 439]{Weibel2013} for more details). In particular, this applies to $R = k[T]/I$ where $k[T]/I$ is finite-dimensional over $k$.
\end{proof}
\begin{thm}\label{thm:maintheorem}
    The algebraic $K$-groups of the exact category $\End_n\Pcl(k)$ are given by
    \begin{align}
        K_i(\End_n\Pcl(k)) \cong \bigoplus_{M} K_i(k[t_1,\dots,t_n]/M)
    \end{align}
    where $M$ ranges over all the maximal ideals of the polynomial ring $k[t_1,\dots,t_n]$.
\end{thm}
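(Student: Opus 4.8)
The plan is to start from the preceding corollary and Proposition~\ref{thm:lastreduction}, which together reduce the computation to the filtered colimit
\begin{align*}
    K_i(\End_n\Pcl(k)) \cong \varinjlim_I K_i\left(\fgmod{\frac{k[T]/I}{\Jac(k[T]/I)}}\right),
\end{align*}
where $I$ ranges over the ideals with $k[T]/I$ finite-dimensional over $k$, ordered by reverse inclusion. The whole argument then rests on understanding the individual terms of this system and its transition maps.

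First I would identify each term. For such an $I$, the ring $R := k[T]/I$ is a finite-dimensional commutative $k$-algebra, hence Artinian, so $R/\Jac(R)$ is a semisimple commutative ring. Since $\Jac(R)$ is the intersection of the finitely many maximal ideals of $R$, the Chinese Remainder Theorem gives
\begin{align*}
    R/\Jac(R) \cong \prod_{M\supseteq I} k[T]/M,
\end{align*}
a finite product of fields, where $M$ runs over the maximal ideals of $k[T]$ containing $I$; each $k[T]/M$ is finite-dimensional over $k$ by the Nullstellensatz. Because $\fgmod{\prod_j R_j}$ splits as the product of exact categories $\prod_j \fgmod{R_j}$, and $K$-theory carries a finite product of exact categories to the direct sum of the $K$-groups, each term becomes
\begin{align*}
    K_i\left(\fgmod{R/\Jac(R)}\right) \cong \bigoplus_{M\supseteq I} K_i(k[T]/M).
\end{align*}

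Next I would analyse the transition maps. For $I\subseteq J$ the relevant square is~\eqref{eqn:forgetful} with $R = k[T]/I$ and $S = k[T]/J$; the induced map $R/\Jac(R)\to S/\Jac(S)$ is, under the product decompositions above, the projection of $\prod_{M\supseteq I} k[T]/M$ onto its factors indexed by $M\supseteq J$ (using $\{M:M\supseteq J\}\subseteq\{M:M\supseteq I\}$). The forgetful functor along a projection $A\times B\to A$ sends an $A$-module to the $(A\times B)$-module on which $B$ acts by zero, and on $K$-theory this is exactly the inclusion of the summand $K_i(A\text{-}\mathrm{mod})$; hence each transition map in the colimit is the inclusion of summands $\bigoplus_{M\supseteq J} K_i(k[T]/M)\hookrightarrow \bigoplus_{M\supseteq I} K_i(k[T]/M)$. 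Taking the filtered colimit then assembles these into $\bigoplus_M K_i(k[T]/M)$ over all maximal ideals of $k[T]$: each $M$ is present exactly once, since the inclusions never collapse summands, and every maximal ideal does appear because $k[T]/M$ is finite-dimensional over $k$, so that $I=M$ itself belongs to the indexing system.

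I expect the main obstacle to be this third step: carefully matching the forgetful functors of~\eqref{eqn:forgetful} with the product decompositions so as to confirm that the transition maps really are inclusions of direct summands. Everything else (Artin--Wedderburn, the Nullstellensatz, and the behaviour of $K$-theory on finite products) is standard, but the colimit comes out as a direct sum over all maximal ideals only because these maps are the expected split inclusions rather than something that could identify or annihilate summands.
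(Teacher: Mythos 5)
Your proposal is correct and takes essentially the same route as the paper's proof: reduce via Proposition~\ref{thm:lastreduction} to the colimit of $K_i$ of the semisimple quotients, identify each term with $\bigoplus_{M\supseteq I}K_i(k[T]/M)$, check that the transition maps induced by the ring projections are inclusions of direct summands on $K$-groups, and pass to the filtered colimit using that $k[T]/M$ is finite-dimensional over $k$. The only cosmetic difference is that you split $(k[T]/I)/\Jac(k[T]/I)$ directly by the Chinese Remainder Theorem, whereas the paper first decomposes $k[T]/I$ into its localizations at the maximal ideals containing $I$; if anything, your explicit verification that restriction of scalars along a projection $A\times B\to A$ gives the split inclusion $K_i(A)\hookrightarrow K_i(A)\oplus K_i(B)$ spells out a step the paper merely asserts.
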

\begin{proof}
    We must calculate the limit
    \begin{align}\label{eqn:finaldirectlimit}
\varinjlim_I K_i\left( \fgmod{\frac{k[T]/I}{\Jac(k[T]/I)}}\right)
\end{align}
given in Proposition~\ref{thm:lastreduction}. So, fix an ideal $I$ such that $k[T]/I$ is finite-dimensional. Then there are finitely many maximal ideals $M_1,\dots,M_k$ of $k[T]$ that contain $I$ and 
 \begin{align*}
     k[T]/I \cong \bigoplus_{j=1}^k (k[T]/I)_{M_j}
 \end{align*}
 via the obvious map (e.g. \cite[Theorem 5.20]{Goertz2010}). Here, by $(k[T]/I)_{M_i}$, we abuse notation and mean the localization of $k[T]/I$ away from the maximal ideal $M_i(k[T]/I)$. If $J$ is an ideal containing $I$, then there is a subset $\{N_1,\dots,N_\ell\}$ of the maximal ideals $\{M_1,\dots,M_k\}$ that contain $J$ and the quotient homomorphism $k[T]/I\to k[T]/J$ induces the map
\begin{align*}
    \bigoplus_{j=1}^k (k[T]/I)_{M_j}\longrightarrow\bigoplus_{j=1}^\ell (k[T]/J)_{N_j}
\end{align*}
which must be the projection map. The induced map on $K$-groups that fits into the direct limit in~\eqref{eqn:finaldirectlimit}, by  is then the map
\begin{align*}
    \bigoplus_{j=1}^\ell K_i(k[T]/N_j)\longrightarrow\bigoplus_{j=1}^k K_i(k[T]/M_j).
\end{align*}
given by the sum of the inclusion maps. Taking the direct limit gives the stated result, once we note that $k[T]/M$ is finite-dimensional over $k$ for any maximal ideal $M$.
\end{proof}
In particular, we obtain the following amusing, possibly well-known result.
\begin{cor}
    For a field $k$, there is an isomorphism of abelian groups
\begin{align*}
    \tilde{k}_0 := \left\{ \frac{1 + a_1t + \cdots + a_nt^n}{1 + b_1t + \cdots + b_mt^m} : a_i,b_j\in k\right \} \cong \bigoplus_{|\mathrm{Spec}(k[T])| - 1}\Z.
\end{align*}
\end{cor}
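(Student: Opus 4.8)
The statement follows by comparing two computations of $K_0$ in the case $n=1$, so throughout I write $k[t]$ for $k[T]$. By Theorem~\ref{thm:maintheorem} we have $K_0(\End_1\Pcl(k))\cong\bigoplus_M K_0(k[t]/M)$, where $M$ ranges over the maximal ideals of $k[t]$. Each quotient $k[t]/M$ is a field, so $K_0(k[t]/M)\cong\Z$, and this group is therefore free abelian of rank equal to the number of maximal ideals of $k[t]$. Since $\mathrm{Spec}(k[t])$ consists of the generic point $(0)$ together with the maximal ideals (the monic irreducibles), that rank is $|\mathrm{Spec}(k[t])|-1=:\kappa$; and because $k[t]$ has infinitely many monic irreducibles over any field, $\kappa$ is an infinite cardinal. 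On the other hand, the first Theorem above (Almkvist) gives $K_0(\End_1\Pcl(k))\cong K_0(k)\times\tilde k_0\cong\Z\times\tilde k_0$.

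Equating the two presentations yields $\Z\oplus\tilde k_0\cong\bigoplus_\kappa\Z$. In particular $\tilde k_0$ is (a direct summand, hence) a subgroup of a free abelian group, so $\tilde k_0$ is itself free abelian. Writing $r$ for its rank, we get $1+r=\kappa$; were $r$ finite this would be impossible, so $r$ is infinite and $1+r=r=\kappa$. Hence $\tilde k_0\cong\bigoplus_\kappa\Z=\bigoplus_{|\mathrm{Spec}(k[t])|-1}\Z$, as claimed.

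I would regard the infinite-cardinal bookkeeping as the only subtle point: the ``$-1$'' is harmless precisely because $\mathrm{Spec}(k[t])$ is infinite, so discarding the $K_0(k)=\Z$ summand does not change the rank. If one prefers an argument that avoids invoking the two cited theorems, one can instead identify $\tilde k_0$ directly: it is the group of fractions of the cancellative commutative monoid $P$ of polynomials with constant term $1$ under multiplication. Unique factorisation in $k[t]$ shows $P$ is free on the irreducible polynomials with constant term $1$ — given any factorisation into irreducibles, normalising each irreducible factor to have constant term $1$ absorbs the leading scalar automatically — so $\tilde k_0$ is free abelian on that generating set. These generators correspond to the maximal ideals other than $(t)$, again of cardinality $\kappa$, recovering the same conclusion.
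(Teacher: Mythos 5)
Your first argument is precisely the derivation the paper intends: the corollary is stated without proof immediately after Theorem~\ref{thm:maintheorem}, and the implied reading is to take $n=1$, note that each $k[t]/M$ is a field so $K_0(k[t]/M)\cong\Z$, and compare with the Kelley--Spanier/Almkvist computation $K_0(\End\Pcl(k))\cong K_0(k)\times\tilde{k}_0$. You improve on the paper in two respects. First, you make explicit the cancellation step the paper silently elides: from $\Z\oplus\tilde{k}_0\cong\bigoplus_\kappa\Z$ one cannot cancel a $\Z$ summand in general, and your argument (subgroups of free abelian groups are free, rank is well defined via $\mathbb{Q}\otimes-$, and $1+r=\kappa$ forces $r=\kappa$ because $\kappa$ is infinite, $k[t]$ having infinitely many monic irreducibles over any field) is exactly what is needed to make the ``$-1$'' harmless. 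Second, your closing argument via unique factorisation --- $\tilde{k}_0$ is the group of fractions of the free commutative monoid on irreducible polynomials normalised to have constant term $1$, the normalisation being possible precisely when the irreducible is not $t$ --- is a genuinely different and more elementary route that bypasses $K$-theory entirely; it even sharpens the bookkeeping, since the natural basis corresponds to the maximal ideals other than $(t)$ and so has cardinality $|\Spec(k[t])|-2$, which agrees with the stated $|\Spec(k[t])|-1$ only because these are infinite cardinals. Both routes are correct, and the second doubles as an independent consistency check on Theorem~\ref{thm:maintheorem}.
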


\bibliographystyle{alpha}
\bibliography{/media/jpolak/KINGSTON/math_papers/polakmain.bib}

\end{document}